
\documentclass[12pt,a4paper]{article}
\usepackage[utf8x]{inputenc}
\usepackage{ucs}
\usepackage{amsmath}
\usepackage{amsfonts}
\usepackage{amssymb}
\usepackage{graphicx}
\usepackage{mathrsfs} 
\usepackage{url} 
\usepackage{xcolor}
\usepackage{amsthm}

\newcommand{\R}{{\mathbb R}}
\newcommand{\Q}{{\mathbb Q}}

\newcommand{\N}{{\mathbb N}}
\newcommand{\Z}{{\mathbb Z}}

\def\EMdash{\leavevmode\hbox to 7.5mm{\vrule height .63ex depth -.59ex
    width 5.4mm\hfill}}


\newcommand{\Phibar}{\overline{\Phi}}

\numberwithin{equation}{section}

\newtheorem{thm}{Theorem}[section]
\newtheorem{lemma}[thm]{Lemma}
\newtheorem{coll}[thm]{Corollary}

\title{Summation of certain infinite Fibonacci related series}
\author{Bakir Farhi \\[1mm]
Laboratoire de Mathématiques appliquées \\
Faculté des Sciences Exactes \\
Université de Bejaia, 06000 Bejaia, Algeria \\[1mm]
bakir.farhi@gmail.com
}
\date{}

\begin{document}
\maketitle

\begin{abstract}
In this paper, we find the closed sums of certain type of Fibonacci related convergent series. In particular, we generalize some results already obtained by Brousseau, Popov, Rabinowitz and others. 
\end{abstract}
\textbf{MSC 2010:} 11B39, 97I30. \\
\textbf{Keywords:} Fibonacci numbers, convergent series.

\section{Introduction}
Throughout this paper, we let $\N^*$ denote the set $\N \setminus \{0\}$ of positive integers. We denote by $\Phi$ the golden ration ($\Phi := \frac{1 + \sqrt{5}}{2}$) and by $\Phibar$ its conjugate in the quadratic field $\Q(\sqrt{5})$; that is $\Phibar := \frac{1 - \sqrt{5}}{2} = - \frac{1}{\Phi}$.

The Fibonacci sequence ${(F_n)}_{n \in \N}$ is defined by: $F_0 = 0$, $F_1 = 1$ and for all $n \in \N$:
\begin{equation}\label{eq 1.1}
F_{n + 2} = F_n + F_{n + 1}
\end{equation}
This sequence can be extended to the negative index $n$ by rewriting the recurrence relation \eqref{eq 1.1} as: $F_n = F_{n + 2} - F_{n + 1}$. By induction, we easily show that for all $n \in \Z$, we have:
\begin{equation}\label{eq 1.2}
F_{- n} = (-1)^{n + 1} F_n
\end{equation}
A closed formula of $F_n$ ($n \in \Z$) in terms of $n$ is given by:
\begin{equation}\label{eq 1.3}
F_n = \frac{1}{\sqrt{5}} \left(\Phi^n - \Phibar^n\right)
\end{equation}
Formula \eqref{eq 1.3} is called \emph{the Binet Formula} (see e.g., \cite[Chapter 8]{hons}). \\
Another sequence ultimately related to the Fibonacci sequence is the Lucas sequence ${(L_n)}_{n \in \Z}$, which is defined by: $L_0 = 2$, $L_1 = 1$ and for all $n \in \Z$:
\begin{equation}\label{eq 1.4}
L_{n + 2} = L_n + L_{n + 1}
\end{equation}
The connections and likenesses between the Fibonacci and the Lucas sequences are numerous; among them, we just cite the followings:
\begin{eqnarray}
L_n & = & F_{n - 1} + F_{n + 1} \label{eq 1.5} \\
L_n & = & \Phi^n + \Phibar^n \label{eq 1.6} \\
F_{2 n} & = & F_n L_n \label{eq 1.7}
\end{eqnarray} 
which hold for any $n \in \Z$. \\
Using only the Binet formula \eqref{eq 1.3}, several properties and formulas related to the Fibonacci sequence can be found. We show for example that for all $(n , m , r) \in \Z^3$, we have:
\begin{equation}\label{eq 1.8}
F_r F_{n - m} = (-1)^m \left(F_n F_{m + r} - F_m F_{n + r}\right)
\end{equation}
We call Formula \eqref{eq 1.8} \emph{the generalized subtraction formula}, where \emph{the subtraction formula} is that which is obtained by taking $r = 1$ in \eqref{eq 1.8}. Note that Formula \eqref{eq 1.8} plays a vital role in Section 2. Using Binet's formula, we also show that for all $n , r \in \Z$, we have:
\begin{eqnarray}
\Phi^n F_r & = & \Phi^r F_n + (-1)^{r + 1} F_{n - r} \label{eq 1.9} \\
\Phibar^n F_r & = & \Phibar^r F_n + (-1)^{r + 1} F_{n - r} \label{eq 1.10}
\end{eqnarray}   
These two last formulas will be used in Section 3.

Fibonacci's sequence plays a very important role in theoretical and applied mathematics. Particularly, the summation of series related to the Fibonacci numbers is a fascinating and fertile topic that has many investigations during the last half-century. For example, Good \cite{good} showed the following remarkable formula:
\begin{equation}\label{eq 1.11}
\sum_{n = 0}^{+ \infty} \frac{1}{F_{2^n}} = \frac{7 - \sqrt{5}}{2}
\end{equation}
solving a problem proposed by Millin \cite{mill}. Hoggatt and Bicknell \cite{hogg} obtained a closed formula for the more general infinite series $\sum_{n = 0}^{+ \infty} \frac{1}{F_{k 2^n}}$ ($k \in \N^*$), which can be simplified as:
\begin{equation}\label{eq 2.10}
\sum_{n = 0}^{+ \infty} \frac{1}{F_{k 2^n}} = \frac{1}{F_k} + \frac{1}{F_{2 k}} + \frac{1}{F_{2 k} \Phi^{2 k}}
\end{equation} 
On the other hand, we have the following well-known formulas:
\begin{multline}\label{eq 1.12}
\sum_{n = 1}^{+ \infty} \frac{1}{F_n F_{n + 2}} = 1 ~,~ \sum_{n = 1}^{+ \infty} \frac{1}{F_{2 n - 1} F_{2 n + 1}} = \frac{\sqrt{5} - 1}{2} ~,~ \sum_{n = 1}^{+ \infty} \frac{1}{F_{2 n} F_{2 n + 2}} = \frac{3 - \sqrt{5}}{2} ~,~ \\
\sum_{n = 1}^{+ \infty} \frac{(-1)^n}{F_n F_{n + 1}} = \frac{1 - \sqrt{5}}{2} ~,~ \sum_{n = 1}^{+ \infty} \frac{(-1)^n}{F_n F_{n + 2}} = 2 - \sqrt{5} ~~~~~~~~~~ 
\end{multline}
In this paper, we investigate two types of infinite Fibonacci related series. The first one consists of the series of the form:
$$
\sum_{n = 1}^{+ \infty} (-1)^{\varepsilon_n} \frac{F_{u_{n + k} - u_n}}{F_{u_n} F_{u_{n + k}}} ,
$$
where $k$ is a positive integer, ${(u_n)}_n$ is a sequence of positive integers tending to infinity with $n$ and ${(\varepsilon_n)}_n$ is a sequence of integers depending on ${(u_n)}_n$. The obtained results concerning this type of series generalize, in particular, Formulas \eqref{eq 2.10} and \eqref{eq 1.12}. The second type of series which we consider consists of the series of the form:
$$
\sum_{n = 1}^{+ \infty} \frac{(-1)^{\varepsilon_n}}{F_{u_n} \Phi^{u_n}} ,
$$
where ${(u_n)}_n$ is an increasing sequence of positive integers and ${(\varepsilon_n)}_n$ is a sequence of integers depending on ${(u_n)}_n$. We show in particular that some of such series can be transformed on series with rational terms. 

\section{The first type of series}\label{sec 2}
In this section, we deal with series of the form $\sum_{n \geq 1} (-1)^{\varepsilon_n} \frac{F_{u_{n + k} - u_n}}{F_{u_n} F_{u_{n + k}}}$, where $k$ is a positive integer, ${(u_n)}_n$ is a sequence of positive integers tending to infinity with $n$ and ${(\varepsilon_n)}_n$ is a sequence of integers depending on ${(u_n)}_n$. We begin with the following general result.
\begin{thm}\label{thm 2.1}
Let ${(u_n)}_{n \geq 1}$ be a sequence of positive integers, tending to infinity with $n$, and let $k$ be a positive integer. Then, for any $t \in \Z^*$, we have:
\begin{equation}\label{eq 2.1}
\sum_{n = 1}^{+ \infty} (-1)^{u_n} \frac{F_{u_{n + k} - u_n}}{F_{u_n} F_{u_{n + k}}} = \frac{1}{F_t} \left(\sum_{n = 1}^{k} \frac{F_{u_n + t}}{F_{u_n}} - k \Phi^t\right) 
\end{equation}
\end{thm}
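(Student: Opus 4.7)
The generalized subtraction formula \eqref{eq 1.8} is exactly tailored for this identity, so my plan is to apply it to convert the general term into a telescoping difference, and then pass to the limit using Binet's formula.

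My first step is to invoke \eqref{eq 1.8} with the substitution $n \leftarrow u_{n+k}$, $m \leftarrow u_n$, $r \leftarrow t$. This yields
$$F_t\, F_{u_{n+k} - u_n} \;=\; (-1)^{u_n}\!\left(F_{u_{n+k}} F_{u_n + t} - F_{u_n} F_{u_{n+k} + t}\right),$$
and dividing by $F_t\, F_{u_n}\, F_{u_{n+k}}$ (note $F_t \neq 0$ since $t \in \Z^*$, and the $F_{u_n}$ are nonzero since $u_n \in \N^*$) immediately gives
$$(-1)^{u_n}\,\frac{F_{u_{n+k} - u_n}}{F_{u_n} F_{u_{n+k}}} \;=\; \frac{1}{F_t}\!\left(\frac{F_{u_n + t}}{F_{u_n}} - \frac{F_{u_{n+k} + t}}{F_{u_{n+k}}}\right).$$

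Next I would sum this identity from $n = 1$ to $n = N$. The right-hand side telescopes with a shift of $k$, so the indices $k+1, k+2, \ldots, N$ cancel, leaving
$$\sum_{n=1}^{N} (-1)^{u_n}\,\frac{F_{u_{n+k} - u_n}}{F_{u_n} F_{u_{n+k}}} \;=\; \frac{1}{F_t}\!\left(\sum_{n=1}^{k}\frac{F_{u_n + t}}{F_{u_n}} \;-\; \sum_{n=N+1}^{N+k}\frac{F_{u_n + t}}{F_{u_n}}\right).$$

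To finish, I need to evaluate the limit of the tail sum as $N \to \infty$. Using Binet's formula \eqref{eq 1.3}, for any positive integer $m$,
$$\frac{F_{m+t}}{F_m} \;=\; \frac{\Phi^{m+t} - \Phibar^{m+t}}{\Phi^m - \Phibar^m} \;=\; \Phi^t \cdot \frac{1 - (\Phibar/\Phi)^m \Phibar^t}{1 - (\Phibar/\Phi)^m}.$$
Since $|\Phibar/\Phi| < 1$ and $u_n \to +\infty$, each of the $k$ terms in the tail sum tends to $\Phi^t$, so the tail sum tends to $k \Phi^t$. This establishes convergence of the original series and yields precisely the right-hand side of \eqref{eq 2.1}.

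There is no real obstacle here; the only point requiring a little care is justifying the limit $F_{u_n+t}/F_{u_n} \to \Phi^t$ uniformly across the $k$ tail terms, which is handled cleanly by the Binet formula because $u_{N+1}, \ldots, u_{N+k}$ all tend to infinity with $N$.
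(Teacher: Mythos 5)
Your proof is correct and follows essentially the same route as the paper: both apply the generalized subtraction formula \eqref{eq 1.8} to rewrite the general term as the telescoping difference $\frac{1}{F_t}\bigl(\frac{F_{u_n + t}}{F_{u_n}} - \frac{F_{u_{n+k} + t}}{F_{u_{n+k}}}\bigr)$ and then send the $k$-term tail to $k\Phi^t$ via Binet's formula. The only cosmetic differences are that your substitution $(u_{n+k}, u_n, t)$ into \eqref{eq 1.8} avoids the paper's detour through $F_{-t} = (-1)^{t+1}F_t$ (it uses the triplet $(u_{n+k}+t, u_n+t, -t)$), and you telescope by hand where the paper invokes its Lemma \ref{lemma 2.2}; note also a harmless slip in your intermediate display, where the factor $\Phibar^t$ should read $(\Phibar/\Phi)^t$, which does not affect the limit.
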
 
To prove this theorem, we need the following lemma.
\begin{lemma}\label{lemma 2.2}
Let ${(x_n)}_{n \geq 1}$ be a convergent real sequence and let $x \in \R$ be its limit. Then, for all $k \in \N$, we have:
$$
\sum_{n = 1}^{+ \infty} \left(x_{n + k} - x_n\right) = k x - \sum_{n = 1}^{k} x_n .
$$
\end{lemma}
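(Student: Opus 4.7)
The plan is to analyze the partial sums directly and observe that they telescope. For $N \geq 1$, set
$$
S_N \;=\; \sum_{n=1}^{N} \left(x_{n+k} - x_n\right) \;=\; \sum_{n=1}^{N} x_{n+k} \;-\; \sum_{n=1}^{N} x_n.
$$
Reindex the first sum by $m = n+k$ to rewrite it as $\sum_{m=k+1}^{N+k} x_m$. For $N \geq k$, the overlap of the two summation ranges is $\{k+1, \dots, N\}$, so those terms cancel and one is left with
$$
S_N \;=\; \sum_{m=N+1}^{N+k} x_m \;-\; \sum_{n=1}^{k} x_n.
$$

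Next I would take the limit $N \to \infty$. The second sum is independent of $N$, so the only work is to show that $\sum_{m=N+1}^{N+k} x_m \to kx$. This is immediate: it is a sum of exactly $k$ terms, each of which is $x_m$ with $m \to \infty$, hence each tends to $x$ since $x_n \to x$. Adding $k$ convergent sequences gives the limit $kx$. Therefore
$$
\sum_{n=1}^{+\infty} \left(x_{n+k} - x_n\right) \;=\; \lim_{N \to \infty} S_N \;=\; k x \;-\; \sum_{n=1}^{k} x_n,
$$
which is exactly the claimed identity. The case $k = 0$ is trivial since both sides equal $0$.

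There is no real obstacle here; the only subtlety worth flagging in the write-up is that the infinite series on the left-hand side is not assumed to converge absolutely (it need not), so one must establish convergence of the partial sums \emph{as stated}, i.e., via the telescoping identity above, rather than by any term-by-term estimate. The computation does exactly that: convergence of $S_N$ is deduced from convergence of $(x_n)$, not assumed a priori.
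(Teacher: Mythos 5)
Your proof is correct and follows essentially the same route as the paper: both compute the partial sum $S_N$ explicitly, arrive at the identity $S_N = \sum_{i=1}^{k} x_{N+i} - \sum_{i=1}^{k} x_i$ (the paper via a double telescoping sum with an interchange of summation order, you via a reindexing and cancellation of the overlap), and then let $N \to \infty$ using $x_{N+i} \to x$. The difference is purely one of bookkeeping; your closing remark about deducing, rather than assuming, convergence of the series is a fair point that the paper leaves implicit.
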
 
\begin{proof}
Let $k \in \N$ be fixed. For any positive integer $N$, we have:
\begin{eqnarray*}
\sum_{n = 1}^{N} \left(x_{n + k} - x_n\right) & = & \sum_{n = 1}^{N} \sum_{i = 1}^{k} \left(x_{n + i} - x_{n + i - 1}\right) \\
& = & \sum_{i = 1}^{k} \sum_{n = 1}^{N} \left(x_{n + i} - x_{n + i - 1}\right) \\
& = & \sum_{i = 1}^{k} \left(x_{N + i} - x_i\right) \\
& = & \sum_{i = 1}^{k} x_{N + i} - \sum_{i = 1}^{k} x_i .
\end{eqnarray*}
The formula of the lemma immediately follows by tending $N$ to infinity.
\end{proof}
\begin{proof}[Proof of Theorem \ref{thm 2.1}]
Let $t \in \Z^*$ be fixed. By applying the generalized subtraction formula \eqref{eq 1.8} for the triplet $(u_{n + k} + t , u_n + t , - t)$, we obtain:
$$
F_{- t} F_{u_{n + k} - u_n} = (-1)^{u_n + t} \left(F_{u_{n + k} + t} F_{u_n} - F_{u_n + t} F_{u_{n + k}}\right) .
$$
Then, by dividing the two sides of this last equality by $(-1)^{u_n} F_{- t} F_{u_n} F_{u_{n + k}}$, we obtain (since $F_{- t} = (-1)^{t + 1} F_t$ according to \eqref{eq 1.2}):
$$
(-1)^{u_n} \frac{F_{u_{n + k} - u_n}}{F_{u_n} F_{u_{n + k}}} = - \frac{1}{F_t} \left(\frac{F_{u_{n + k} + t}}{F_{u_{n + k}}} - \frac{F_{u_n + t}}{F_{u_n}}\right) .
$$
By taking the sum from $n = 1$ to infinity on the two sides of this last equality
and then by applying Lemma \ref{lemma 2.2} for $x_n = \frac{F_{u_n + t}}{F_{u_n}}$, which tends to $\Phi^t$ as $n$ tends to infinity (according to the Binet Formula \eqref{eq 1.3}), we conclude that:
$$
\sum_{n = 1}^{+ \infty} (-1)^{u_n} \frac{F_{u_{n + k} - u_n}}{F_{u_n} F_{u_{n + k}}} = \frac{1}{F_t} \left(\sum_{n = 1}^{k} \frac{F_{u_n + t}}{F_{u_n}} - k \Phi^t\right) ,
$$
as required. Theorem \ref{thm 2.1} is proved.
\end{proof}

\noindent\textbf{Remark.} As we will see below, the (non-vanishing) integer parameter $t$ appearing in the right-hand side of Formula \eqref{eq 2.1} plays an important role in the applications. Indeed, in each situation, we must choose it properly in order to obtain the simplest possible formula. 

From Theorem \ref{thm 2.1}, we deduce the following result which is already pointed out by Hu et al. \cite{hu} in the more general case of the generalized Fibonacci sequences.
\begin{coll}\label{coll 2.3}
Let ${(u_n)}_{n \geq 1}$ be a sequence of positive integers, tending to infinity with $n$. Then, we have:
\begin{equation}\label{eq 2.2}
\sum_{n = 1}^{+ \infty} (-1)^{u_n} \frac{F_{u_{n + 1} - u_n}}{F_{u_n} F_{u_{n + 1}}} = \frac{(-1)^{u_1}}{F_{u_1} \Phi^{u_1}}
\end{equation}
\end{coll}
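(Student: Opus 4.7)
The plan is to deduce the corollary directly from Theorem \ref{thm 2.1} by specializing to $k=1$. With $k=1$, formula \eqref{eq 2.1} reads, for any $t \in \Z^*$:
$$
\sum_{n = 1}^{+ \infty} (-1)^{u_n} \frac{F_{u_{n + 1} - u_n}}{F_{u_n} F_{u_{n + 1}}} = \frac{1}{F_t} \left(\frac{F_{u_1 + t}}{F_{u_1}} - \Phi^t\right) .
$$
So the whole content of the corollary is to verify that the right-hand side is actually independent of $t$ and simplifies to $\frac{(-1)^{u_1}}{F_{u_1}\Phi^{u_1}}$. (A useful sanity check: the left-hand side carries no $t$, so the $t$-dependence on the right must cancel — and this cancellation is exactly what must be exhibited.)

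To carry out the simplification I would invoke identity \eqref{eq 1.9}, which is tailor-made for relating $\Phi^t$ to the ratio $F_{u_1+t}/F_{u_1}$. Taking $n := u_1 + t$ and $r := u_1$ in \eqref{eq 1.9} yields
$$
\Phi^{u_1 + t} F_{u_1} = \Phi^{u_1} F_{u_1 + t} + (-1)^{u_1 + 1} F_t .
$$
Dividing both sides by $\Phi^{u_1} F_{u_1}$ and rearranging gives
$$
\frac{F_{u_1 + t}}{F_{u_1}} - \Phi^t = \frac{(-1)^{u_1} F_t}{\Phi^{u_1} F_{u_1}} ,
$$
and dividing by $F_t$ produces $\frac{(-1)^{u_1}}{\Phi^{u_1} F_{u_1}}$, which is indeed independent of $t$. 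Substituting this into the specialized form of \eqref{eq 2.1} yields \eqref{eq 2.2}.

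There is no real obstacle in this argument: once Theorem \ref{thm 2.1} is in hand, the entire proof is a two-line manipulation using the Binet-type identity \eqref{eq 1.9}. The only mild choice is which elementary identity to apply in order to eliminate $t$; one could equally well plug Binet's formula \eqref{eq 1.3} into $F_{u_1+t}/F_{u_1} - \Phi^t$ directly, whereupon the $\Phi^{u_1+t}$ terms cancel and one is left with $\Phibar^{u_1}(\Phi^t - \Phibar^t)/(\Phi^{u_1}-\Phibar^{u_1}) = \Phibar^{u_1} F_t / F_{u_1}$, and then using $\Phibar = -1/\Phi$ gives the same conclusion.
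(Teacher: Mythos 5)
Your proof is correct and follows the same basic route as the paper: specialize Theorem \ref{thm 2.1} to $k=1$ and simplify the resulting right-hand side. The only difference is how the simplification is done. The paper exploits the freedom in $t$ by choosing $t=-u_1$ (legitimate since $u_1\geq 1$, so $t\in\Z^*$): then $F_{u_1+t}=F_0=0$, the parenthesis collapses to $-\Phi^{-u_1}$, and with $F_{-u_1}=(-1)^{u_1+1}F_{u_1}$ from \eqref{eq 1.2} the stated value drops out in one line, with no further identity needed. You instead keep $t$ arbitrary and use \eqref{eq 1.9} (or, equivalently, Binet's formula \eqref{eq 1.3}) to show that $\frac{1}{F_t}\bigl(\frac{F_{u_1+t}}{F_{u_1}}-\Phi^t\bigr)=\frac{(-1)^{u_1}}{F_{u_1}\Phi^{u_1}}$ for every $t\in\Z^*$; your application of \eqref{eq 1.9} with $n=u_1+t$, $r=u_1$ is accurate, so the argument is complete. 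This costs a short computation compared with the paper's one-liner, but it has the mild virtue of exhibiting explicitly that the $t$-dependence in \eqref{eq 2.1} cancels, rather than relying on one convenient choice of $t$.
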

\begin{proof}
It suffices to take in Formula \eqref{eq 2.1} of Theorem \ref{thm 2.1}: $k = 1$ and $t = - u_1$ (recall also that $F_{- u_1} = (-1)^{u_1 + 1} F_{u_1}$, according to Formula \eqref{eq 1.2}).  
\end{proof}
If ${(u_n)}_{n \geq 1}$ is an arithmetic sequence of positive integers, Theorem \ref{thm 2.1} gives the following result constituting a generalization of the both results by Brousseau \cite{brou} (corresponding to $u_n = n$) and by Popov \cite{popo} (corresponding to $k = 1$ and $r$ even).
\begin{coll}\label{coll 2.4}
Let ${(u_n)}_{n \geq 1}$ be an increasing arithmetic sequence of positive integers and let $r$ be its common difference. Then, for any positive integer $k$, we have:
\begin{equation}\label{eq 2.3}
\sum_{n = 1}^{+ \infty} \frac{(-1)^{r (n - 1)}}{F_{u_n} F_{u_{n + k}}} = \frac{(-1)^{u_1}}{F_{k r}} \left(\sum_{n = 1}^{k} \frac{F_{u_n + 1}}{F_{u_n}} - k \Phi\right)
\end{equation}
In particular, we have:
\begin{equation}\label{eq 2.4}
\sum_{n = 1}^{+ \infty} \frac{(-1)^{r (n - 1)}}{F_{u_n} F_{u_{n + 1}}} = \frac{1}{F_r F_{u_1} \Phi^{u_1}}
\end{equation}
\end{coll}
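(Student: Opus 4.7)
The plan is to specialize Theorem \ref{thm 2.1} to the arithmetic case and make a judicious choice of $t$. Since ${(u_n)}_{n \geq 1}$ is arithmetic with common difference $r$, we have $u_n = u_1 + (n-1) r$, so $u_{n+k} - u_n = k r$ is constant in $n$. Thus $F_{u_{n+k} - u_n} = F_{k r}$ can be pulled outside the sum in the left-hand side of \eqref{eq 2.1}. Moreover, $(-1)^{u_n} = (-1)^{u_1} (-1)^{r(n-1)}$, so the overall sign $(-1)^{u_1}$ also factors out. Applying Theorem \ref{thm 2.1} with the choice $t = 1$ — which is the natural choice because $F_1 = 1$ clears the $1/F_t$ factor, and because $F_{u_n + 1}/F_{u_n}$ is the usual ratio that appears — and then dividing through by $(-1)^{u_1} F_{kr}$ yields \eqref{eq 2.3} immediately.

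For \eqref{eq 2.4}, the cleanest route is to invoke Corollary \ref{coll 2.3} rather than to specialize \eqref{eq 2.3} to $k = 1$. Indeed, for the arithmetic sequence $(u_n)$, the left-hand side of \eqref{eq 2.2} equals $(-1)^{u_1} F_r \sum_{n \geq 1} \frac{(-1)^{r(n-1)}}{F_{u_n} F_{u_{n+1}}}$, while the right-hand side is $\frac{(-1)^{u_1}}{F_{u_1} \Phi^{u_1}}$; dividing both sides by $(-1)^{u_1} F_r$ gives \eqref{eq 2.4}.

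Alternatively, one can derive \eqref{eq 2.4} from \eqref{eq 2.3} with $k = 1$, provided one can simplify the quantity $\frac{F_{u_1 + 1}}{F_{u_1}} - \Phi$. This is the only mildly delicate step: using formula \eqref{eq 1.9} with $n = u_1$ and $r = 1$, namely $\Phi^{u_1} = \Phi F_{u_1} + F_{u_1 - 1}$, one obtains $F_{u_1 + 1} - \Phi F_{u_1} = F_{u_1 - 1} + F_{u_1} - \Phi F_{u_1}$; a short manipulation (or a direct application of Binet's formula, which gives $\Phi F_{u_1 - 1} - F_{u_1} = -\Phibar^{u_1 - 1}$ and hence $\frac{F_{u_1+1}}{F_{u_1}} - \Phi = \frac{(-1)^{u_1}}{\Phi^{u_1} F_{u_1}}$) recovers the needed identity. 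Since both the main obstacle is merely this bookkeeping, the derivation via Corollary \ref{coll 2.3} is preferable.
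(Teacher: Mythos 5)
Your derivation of \eqref{eq 2.3} is exactly the paper's: specialize Theorem \ref{thm 2.1} with $t = 1$, pull out the constant $F_{kr}$ from $F_{u_{n+k}-u_n}$, and factor $(-1)^{u_n} = (-1)^{u_1}(-1)^{r(n-1)}$. For \eqref{eq 2.4} you offer two routes, and your ``alternative'' one (set $k=1$ in \eqref{eq 2.3} and simplify $F_{u_1+1} - \Phi F_{u_1} = \Phibar^{u_1}$ via Binet) is precisely what the paper does; your preferred route via Corollary \ref{coll 2.3} is a legitimate and slightly slicker variant, since Corollary \ref{coll 2.3} already encapsulates that same Binet computation through the choice $t = -u_1$ in Theorem \ref{thm 2.1}, so you only need to divide by $(-1)^{u_1} F_r$. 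Both routes are correct and the computations you sketch (including the identity $\frac{F_{u_1+1}}{F_{u_1}} - \Phi = \frac{(-1)^{u_1}}{\Phi^{u_1} F_{u_1}}$) check out; the difference from the paper is purely organizational.
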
 
\begin{proof}
To obtain Formula \eqref{eq 2.3}, it suffices to take $t = 1$ in Formula \eqref{eq 2.1} of Theorem \ref{thm 2.1} and use that $u_n = r (n - 1) + u_1$ ($\forall n \geq 1$). To obtain Formula \eqref{eq 2.4}, we simply set $k = 1$ in formula \eqref{eq 2.3} and use that $F_{u_1 + 1} - \Phi F_{u_1} = \Phibar^{u_1} = \left(\frac{-1}{\Phi}\right)^{u_1}$ (according to the Binet Formula \eqref{eq 1.3}). This completes the proof. 
\end{proof}

\subsection*{Some numerical examples}
\begin{itemize}
\item All the formulas of \eqref{eq 1.12}, apart from the first one, constitute special cases of Formula \eqref{eq 2.3} of Corollary \ref{coll 2.4}.   
\item Let $k$ be a positive integer and $a \geq 2$ be an integer. By taking in Formula \eqref{eq 2.2} of Corollary \ref{coll 2.3}: $u_n = k a^n$ ($\forall n \geq 1$), we obtain the following formula:
\begin{equation}\label{eq 2.9}
\sum_{n = 1}^{+ \infty} \frac{F_{(a - 1) k a^n}}{F_{k a^n} F_{k a^{n + 1}}} = \frac{1}{F_{k a} \Phi^{k a}}
\end{equation}
which can be considered as a generalization of Formula \eqref{eq 2.10} (we obtain \eqref{eq 2.10} by taking $a = 2$ in \eqref{eq 2.9}).

From Formula \eqref{eq 2.10}, we deduce that $\sum_{n = 0}^{+ \infty} \frac{1}{F_{k 2^n}} \in \Q(\sqrt{5})$ (for any positive integer $k$). But except the geometric sequences with common ration $2$, we don't know any other ``regular'' sequence ${(u_n)}_{n \in \N}$ of positive integers, satisfying the property that $\sum_{n = 0}^{+ \infty} \frac{1}{F_{u_n}} \in \Q(\sqrt{5})$. More precisely, we propose the following open question:
\begin{quote}
\noindent \textbf{Open question.} \emph{Is there any linear recurrence sequence ${(u_n)}_{n \in \N}$ of positive integers, which is not a geometric sequence with common ratio $2$ and which satisfies the property that
$$
\sum_{n = 0}^{+ \infty} \frac{1}{F_{u_n}} \in \Q(\sqrt{5}) ?
$$
}
\end{quote}
Next, by taking $a = 3$ in Formula \eqref{eq 2.9}, we deduce (according to Formula \eqref{eq 1.7}) the following:
\begin{equation}\label{eq c}
\sum_{n = 1}^{+ \infty} \frac{L_{k 3^n}}{F_{k 3^{n + 1}}} = \frac{1}{F_{3 k} \Phi^{3 k}}
\end{equation}
By taking $k = 1$ in Formula \eqref{eq c}, we deduce (after some calculations) the formula:
\begin{equation}\label{eq 2.11}
\sum_{n = 0}^{+ \infty} \frac{L_{3^n}}{F_{3^{n + 1}}} = \frac{\sqrt{5} - 1}{2}
\end{equation}
\item By taking in Formula \eqref{eq 2.1} of Theorem \ref{thm 2.1}: $u_n = F_n$, $k = 2$ and $t = 1$, we obtain the following:
\begin{equation}\label{eq 2.12}
\sum_{n = 1}^{+ \infty} (-1)^{F_n} \frac{F_{F_{n + 1}}}{F_{F_n} F_{F_{n + 2}}} = 1 - \sqrt{5}
\end{equation}
Next, by taking in Formula \eqref{eq 2.1} of Theorem \ref{thm 2.1}: $u_n = F_n$, $k = 1$ and $t = -1$, we obtain the following:
\begin{equation}\label{eq 2.13}
\sum_{n = 1}^{+ \infty} (-1)^{F_n} \frac{F_{F_{n - 1}}}{F_{F_n} F_{F_{n + 1}}} = \frac{1 - \sqrt{5}}{2}
\end{equation}
\end{itemize}
In what follows, we will discover that for the sequences ${(u_n)}_{n \geq 1}$ of positive integers for which any term $u_n$ have the same parity with $n$, it is even possible to determine the sum of the series $\sum_{n \geq 1} \frac{F_{u_{n + k} - u_n}}{F_{u_n} F_{u_{n + k}}}$ (where $k$ is a fixed even positive integer). We have the following:
\begin{thm}\label{thm 2.5}
Let ${(u_n)}_{n \geq 1}$ be a sequence of positive integers, tending to infinity with $n$, and let $k$ be a positive integer. Then, we have:
\begin{equation}\label{eq 2.14}
\sum_{n = 1}^{+ \infty} (-1)^{u_n - n} \frac{F_{u_{n + 2 k} - u_n}}{F_{u_n} F_{u_{n + 2 k}}} = \sum_{n = 1}^{k} (-1)^{u_{2 n - 1} + 1} \frac{F_{u_{2 n} - u_{2 n - 1}}}{F_{u_{2 n}} F_{u_{2 n - 1}}} 
\end{equation}
In particular, if $u_n$ have the same parity with $n$ (for any positive integer $n$), then we have:
\begin{equation}\label{eq 2.15}
\sum_{n = 1}^{+ \infty} \frac{F_{u_{n + 2 k} - u_n}}{F_{u_n} F_{u_{n + 2 k}}} = \sum_{n = 1}^{k} \frac{F_{u_{2 n} - u_{2 n - 1}}}{F_{u_{2 n}} F_{u_{2 n - 1}}} 
\end{equation}
\end{thm}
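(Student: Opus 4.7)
The plan is to rewrite each term of the series on the left-hand side of \eqref{eq 2.14} as a ``stepped'' difference whose step matches the pairing on the right, and then to telescope.

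First I would apply the generalized subtraction formula \eqref{eq 1.8} with $r = 1$ to the triple $(u_{n+2k}, u_n, 1)$. This yields
\[
F_{u_{n + 2k} - u_n} = (-1)^{u_n}\bigl(F_{u_{n+2k}} F_{u_n + 1} - F_{u_n} F_{u_{n+2k} + 1}\bigr) .
\]
Dividing by $F_{u_n} F_{u_{n+2k}}$ and multiplying by $(-1)^{u_n - n}$ collapses the sign to $(-1)^n$ (since $(-1)^{u_n - n}(-1)^{u_n} = (-1)^n$), producing the clean identity
\[
(-1)^{u_n - n}\,\frac{F_{u_{n+2k} - u_n}}{F_{u_n} F_{u_{n+2k}}} \;=\; x_n - x_{n + 2k},
\qquad\text{where } x_n := (-1)^n\,\frac{F_{u_n + 1}}{F_{u_n}}.
\]

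Next, I would compute the partial sum by telescoping. For $N \geq 2k$,
\[
\sum_{n=1}^{N}(x_n - x_{n+2k}) \;=\; \sum_{n=1}^{2k} x_n \;-\; \sum_{n=N+1}^{N+2k} x_n .
\]
The step I expect to be the only delicate point is showing that the tail $\sum_{n=N+1}^{N+2k} x_n$ vanishes as $N \to \infty$. This is where the evenness of $2k$ is crucial: grouping the $2k$ consecutive terms of the tail into $k$ consecutive pairs of opposite signs, each pair has the form $\pm\bigl(\frac{F_{u_{m+1}+1}}{F_{u_{m+1}}} - \frac{F_{u_m+1}}{F_{u_m}}\bigr)$, and each factor tends to $\Phi$ by the Binet formula \eqref{eq 1.3}, so each pair tends to $0$ and the full tail $\to 0$. (Note that without the factor $(-1)^{u_n - n}$, i.e.\ without the alternating $(-1)^n$ in $x_n$, the tail would tend to $2k\Phi$ and the sum would diverge; this is why Theorem \ref{thm 2.5} only treats the even-step case.) Passing to the limit gives
\[
\sum_{n = 1}^{+\infty}(-1)^{u_n - n}\,\frac{F_{u_{n+2k} - u_n}}{F_{u_n} F_{u_{n+2k}}} \;=\; \sum_{n = 1}^{2k} (-1)^n\,\frac{F_{u_n + 1}}{F_{u_n}}.
\]

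Finally I would group the resulting finite sum into $k$ consecutive pairs $(2j-1,\,2j)$, so that
\[
\sum_{n=1}^{2k}(-1)^n\,\frac{F_{u_n + 1}}{F_{u_n}} \;=\; \sum_{j=1}^{k}\frac{F_{u_{2j-1}} F_{u_{2j}+1} - F_{u_{2j}} F_{u_{2j-1}+1}}{F_{u_{2j-1}} F_{u_{2j}}}.
\]
A second application of \eqref{eq 1.8} with $r = 1$ (to the pair $(u_{2j-1}, u_{2j})$) combined with \eqref{eq 1.2} converts each numerator to $(-1)^{u_{2j-1}+1} F_{u_{2j} - u_{2j-1}}$, which produces exactly the right-hand side of \eqref{eq 2.14}. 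The special case \eqref{eq 2.15} is immediate: the hypothesis $u_n \equiv n \pmod 2$ makes $(-1)^{u_n - n} = 1$ on the left and $(-1)^{u_{2j-1}+1} = 1$ on the right.
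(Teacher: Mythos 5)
Your proof is correct, but it takes a route organized differently from the paper's. The paper deduces \eqref{eq 2.14} from Theorem \ref{thm 2.1}: it applies Formula \eqref{eq 2.1} with $t = 1$ separately to the even-indexed subsequence ${(u_{2n})}_n$ and the odd-indexed subsequence ${(u_{2n-1})}_n$, subtracts the two resulting identities (so the $k\Phi$ terms cancel), re-interleaves the two series into the single series on the left of \eqref{eq 2.14}, and ends with the same application of \eqref{eq 1.8} to the finite sum that you perform. You instead argue directly: the same per-term use of \eqref{eq 1.8} with $t = 1$, but with the sign absorbed into the auxiliary sequence $x_n = (-1)^n F_{u_n+1}/F_{u_n}$, followed by a step-$2k$ telescoping. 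Note that $x_n$ does not converge (it oscillates near $\pm\Phi$), so the paper's Lemma \ref{lemma 2.2} is not available to you; your pairing of the $2k$ tail terms, which exploits the evenness of the step, is precisely what replaces it, and you carry it out correctly. What each approach buys: the paper's is shorter because Theorem \ref{thm 2.1} has already done the telescoping and limit work, and it displays the result as a corollary-like consequence of the general theorem; yours is self-contained, avoids the mild bookkeeping of subtracting two convergent series and re-interleaving them into one, and makes visible at a glance why only an even gap $2k$ can work. Your sign computations ($(-1)^{u_n-n}(-1)^{u_n} = (-1)^n$, and the conversion of $F_{u_{2j-1}}F_{u_{2j}+1} - F_{u_{2j}}F_{u_{2j-1}+1}$ into $(-1)^{u_{2j-1}+1}F_{u_{2j}-u_{2j-1}}$ via \eqref{eq 1.8} and \eqref{eq 1.2}) check out, and, like the paper, you only need $u_n \to \infty$ rather than monotonicity, since \eqref{eq 1.8} is valid for negative indices as well.
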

\begin{proof}
By applying Formula \eqref{eq 2.1} of Theorem \ref{thm 2.1} for the sequence ${(u_{2 n})}_{n \geq 1}$ and for $t = 1$, we obtain:
\begin{equation}\label{eq 2.16}
\sum_{n = 1}^{+ \infty} (-1)^{u_{2 n}} \frac{F_{u_{2 n + 2 k} - u_{2 n}}}{F_{u_{2 n}} F_{u_{2 n + 2 k}}} = \sum_{n = 1}^{k} \frac{F_{u_{2 n} + 1}}{F_{u_{2 n}}} - k \Phi
\end{equation}
Next, by applying Formula \eqref{eq 2.1} of Theorem \ref{thm 2.1} for the sequence ${(u_{2 n - 1})}_{n \geq 1}$ and for $t = 1$, we obtain:
\begin{equation}\label{eq 2.17}
\sum_{n = 1}^{+ \infty} (-1)^{u_{2 n - 1}} \frac{F_{u_{2 n + 2 k - 1} - u_{2 n - 1}}}{F_{u_{2 n - 1}} F_{u_{2 n + 2 k - 1}}} = \sum_{n = 1}^{k} \frac{F_{u_{2 n - 1} + 1}}{F_{u_{2 n - 1}}} - k \Phi
\end{equation}
Now, by subtracting \eqref{eq 2.17} from \eqref{eq 2.16}, we get:
\begin{multline*}
\sum_{n = 1}^{+ \infty} \left((-1)^{u_{2 n}} \frac{F_{u_{2 n + 2 k} - u_{2 n}}}{F_{u_{2 n}} F_{u_{2 n + 2 k}}} - (-1)^{u_{2 n - 1}} \frac{F_{u_{2 n + 2 k - 1} - u_{2 n - 1}}}{F_{u_{2 n - 1}} F_{u_{2 n + 2 k - 1}}} \right) \\
= \sum_{n = 1}^{k} \left(\frac{F_{u_{2 n} + 1}}{F_{u_{2 n}}} - \frac{F_{u_{2 n - 1} + 1}}{F_{u_{2 n - 1}}} \right) ,  
\end{multline*}
which we can write as:
$$
\sum_{n = 1}^{+ \infty} (-1)^{u_n - n} \frac{F_{u_{n + 2 k} - u_n}}{F_{u_n} F_{u_{n + 2 k}}} = \sum_{n = 1}^{k} \frac{F_{u_{2 n} + 1} F_{u_{2 n - 1}} - F_{u_{2 n}} F_{u_{2 n - 1} + 1}}{F_{u_{2 n}} F_{u_{2 n - 1}}} .
$$
Finally, since $F_{u_{2 n} + 1} F_{u_{2 n - 1}} - F_{u_{2 n}} F_{u_{2 n - 1} + 1} = (-1)^{u_{2 n - 1} + 1} F_{u_{2 n} - u_{2 n - 1}}$ (according to Formula \eqref{eq 1.8} applied for the triplet $(u_{2 n} + 1 , u_{2 n - 1} + 1 , -1)$), we conclude that:
$$
\sum_{n = 1}^{+ \infty} (-1)^{u_n - n} \frac{F_{u_{n + 2 k} - u_n}}{F_{u_n} F_{u_{n + 2 k}}} = \sum_{n = 1}^{k} (-1)^{u_{2 n - 1} + 1} \frac{F_{u_{2 n} - u_{2 n - 1}}}{F_{u_{2 n}} F_{u_{2 n - 1}}} ,
$$
which is the first formula of the theorem. 

The second formula of the theorem is an immediate consequence of the previous one. The proof is achieved.  
\end{proof}
If ${(u_n)}_{n \geq 1}$ is an arithmetic sequence of positive integers then Theorem \ref{thm 2.5} reduces to the following corollary constituting a generalization of a result by Brousseau \cite{brou} (which actually corresponds to the case $u_n  = n$). 
\begin{coll}\label{coll 2.6}
Let ${(u_n)}_{n \geq 1}$ be an increasing arithmetic sequence of positive integers and let $r$ be its common difference. Then, for any positive integer $k$, we have:
\begin{equation}\label{eq 2.18}
\sum_{n = 1}^{+ \infty} \frac{(-1)^{(r - 1) (n - 1)}}{F_{u_n} F_{u_{n + 2 k}}} = \frac{F_r}{F_{2 k r}} \sum_{n = 1}^{k} \frac{1}{F_{u_{2 n}} F_{u_{2 n - 1}}}
\end{equation}
In particular, we have:
\begin{equation}\label{eq e}
\sum_{n = 1}^{+ \infty} \frac{(-1)^{(r - 1) (n - 1)}}{F_{u_n} F_{u_{n + 2}}} = \frac{1}{F_{u_1} F_{u_2} L_r}
\end{equation}
\end{coll}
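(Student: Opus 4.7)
The plan is to derive \eqref{eq 2.18} as a direct specialization of Theorem \ref{thm 2.5}, and then obtain \eqref{eq e} by plugging $k=1$ and simplifying with identity \eqref{eq 1.7}.

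First I would set $u_n = u_1 + (n-1)r$, so that $u_{n+2k} - u_n = 2kr$ and $F_{u_{n+2k} - u_n} = F_{2kr}$ becomes a constant that can be pulled outside the summation. Applying formula \eqref{eq 2.14} of Theorem \ref{thm 2.5} to this arithmetic sequence, the left-hand side of \eqref{eq 2.14} becomes $F_{2kr} \sum_{n \ge 1} (-1)^{u_n - n}/(F_{u_n} F_{u_{n+2k}})$, and on the right-hand side the numerator becomes $F_{u_{2n}-u_{2n-1}} = F_r$, which also factors out.

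The main bookkeeping step is to check that the two signs match up to a common overall factor that can be cancelled. Writing $u_n - n = u_1 - r + n(r-1)$, one gets $(-1)^{u_n - n} = (-1)^{u_1 - 1} (-1)^{(n-1)(r-1)}$. Similarly, since $u_{2n-1} = u_1 + (2n-2)r$ has the same parity as $u_1$, we obtain $(-1)^{u_{2n-1}+1} = (-1)^{u_1 + 1} = (-1)^{u_1 - 1}$. Thus the factor $(-1)^{u_1 - 1}$ appears on both sides of \eqref{eq 2.14} and cancels, yielding
\begin{equation*}
F_{2kr} \sum_{n=1}^{+\infty} \frac{(-1)^{(n-1)(r-1)}}{F_{u_n} F_{u_{n+2k}}} = F_r \sum_{n=1}^{k} \frac{1}{F_{u_{2n}} F_{u_{2n-1}}},
\end{equation*}
which is \eqref{eq 2.18} after dividing by $F_{2kr}$. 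I expect this sign verification to be the only nontrivial obstacle; the rest is formula manipulation.

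For \eqref{eq e}, I would simply substitute $k=1$ into the formula just proved, collapsing the right-hand sum to a single term $1/(F_{u_1} F_{u_2})$, and then use identity \eqref{eq 1.7} in the form $F_{2r} = F_r L_r$ to simplify $F_r/F_{2r} = 1/L_r$. This gives \eqref{eq e} directly, with no further work.
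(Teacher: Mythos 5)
Your proposal is correct and follows exactly the paper's own route: specializing Theorem \ref{thm 2.5} to the arithmetic sequence $u_n = u_1 + (n-1)r$, so that $F_{u_{n+2k}-u_n} = F_{2kr}$ and $F_{u_{2n}-u_{2n-1}} = F_r$ factor out, and then taking $k=1$ with $F_{2r} = F_r L_r$ for \eqref{eq e}. The sign bookkeeping you carry out explicitly (namely $u_n - n \equiv u_1 - 1 + (n-1)(r-1) \pmod 2$ and $u_{2n-1} \equiv u_1 \pmod 2$, so the common factor $(-1)^{u_1 - 1}$ cancels) is exactly the detail the paper leaves implicit, and it checks out.
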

\begin{proof}
To establish Formula \eqref{eq 2.18}, it suffices to apply Theorem \ref{thm 2.5} together with the formula $u_n = r (n - 1) + u_1$ ($\forall n \geq 1$). To establish Formula \eqref{eq e}, we take $k = 1$ in \eqref{eq 2.18} and we use in addition Formula \eqref{eq 1.7}. 
\end{proof}

\noindent\textbf{Remark.} Let ${(u_n)}_{n \in \N}$ be an increasing arithmetic sequence of natural numbers and let $r$ be its common difference. By Corollary \ref{coll 2.4}, we know a closed form of the sum $\sum_{n = 1}^{+ \infty} \frac{(-1)^{r (n - 1)}}{F_{u_n} F_{u_{n + k}}}$ ($k \in \N^*$) and by Corollary \ref{coll 2.6}, we know a closed form of the sum $\sum_{n = 1}^{+ \infty} \frac{(-1)^{(r - 1) (n - 1)}}{F_{u_n} F_{u_{n + k}}}$ ($k$ an even positive integer). But if $k$ is an odd positive integer, the closed form of the sum $\sum_{n = 1}^{+ \infty} \frac{(-1)^{(r - 1)(n - 1)}}{F_{u_n} F_{u_{n + k}}}$ is still unknown even in the particular case ``$u_n = n$''. However, we shall prove in what follows that if $u_0 = 0$, there is a relationship between the sums $\sum_{n = 1}^{+ \infty} \frac{(-1)^{(r - 1)(n - 1)}}{F_{u_n} F_{u_{n + k}}}$, where $k$ lies in the set of the odd positive integers. We have the following:   
\begin{thm}\label{thm a}
Let
$$
S_{r , k} := \sum_{n = 1}^{+ \infty} \frac{(-1)^{(r - 1)(n - 1)}}{F_{r n} F_{r (n + k)}} ~~~~~~~~~~ (\forall r , k \in \N^*) .
$$
Then, for any positive integer $r$ and any odd positive integer $k$, we have:
\begin{equation}\label{eq d}
S_{r , k} = \frac{F_r}{F_{r k}} \left(S_{r , 1} + (-1)^r \sum_{n = 1}^{(k - 1)/2} \frac{1}{F_{2 n r} F_{(2 n + 1) r}}\right)
\end{equation}
\end{thm}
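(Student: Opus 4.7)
The plan is to derive a clean two-term linear recursion linking $S_{r,k}$ to $S_{r,k-1}$ for all $k \geq 2$, and then to iterate it with alternating signs down to $S_{r,1}$ when $k$ is odd, so that the telescoping collapses via a Catalan-type identity.

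The starting point is the algebraic identity $F_{rk} F_{r(n+1)} - F_r F_{r(n+k)} = (-1)^r F_{rn} F_{r(k-1)}$, which I obtain by applying the generalized subtraction formula \eqref{eq 1.8} to the triplet $(n,m,r') = (rk,\,r,\,rn)$. Dividing by $F_{rk} F_{rn} F_{r(n+1)} F_{r(n+k)}$ gives
\begin{equation*}
\frac{1}{F_{rn}F_{r(n+k)}} \;=\; \frac{F_r}{F_{rk}}\cdot\frac{1}{F_{rn}F_{r(n+1)}} \;+\; \frac{(-1)^r F_{r(k-1)}}{F_{rk}}\cdot\frac{1}{F_{r(n+1)}F_{r(n+k)}}.
\end{equation*}
Multiplying by $(-1)^{(r-1)(n-1)}$ and summing $n \geq 1$, the first piece yields $\frac{F_r}{F_{rk}} S_{r,1}$; for the second I shift $m = n+1$ and use $(-1)^{(r-1)(m-2)} = (-1)^{r-1}(-1)^{(r-1)(m-1)}$, so after extracting the missing $m=1$ term it becomes $(-1)^{r-1}\bigl(S_{r,k-1} - \frac{1}{F_r F_{rk}}\bigr)$. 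Combining with the $(-1)^r$ from the identity yields the clean symmetric recursion
\begin{equation*}
F_{rk}\, S_{r,k} \;+\; F_{r(k-1)}\, S_{r,k-1} \;=\; F_r\, S_{r,1} \;+\; \frac{F_{r(k-1)}}{F_r\, F_{rk}} \qquad (k \geq 2).
\end{equation*}

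Setting $A_k := F_{rk} S_{r,k}$ and $R_k := F_r S_{r,1} + \frac{F_{r(k-1)}}{F_r F_{rk}}$, the recursion reads $A_k + A_{k-1} = R_k$, with $A_1 = F_r S_{r,1}$. For odd $k$, iterating alternately down to $A_1$ gives $A_k = A_1 + \sum_{j=1}^{(k-1)/2}(R_{2j+1} - R_{2j})$. A direct computation yields $R_{2j+1} - R_{2j} = \frac{F_{2rj}^{2} - F_{r(2j-1)} F_{r(2j+1)}}{F_r\, F_{2rj}\, F_{r(2j+1)}}$, and Catalan's identity $F_n^2 - F_{n-a} F_{n+a} = (-1)^{n-a} F_a^2$ (itself an instant consequence of \eqref{eq 1.8} applied to the triplet $(n, n-a, a)$) collapses the numerator at $n = 2rj$, $a = r$ to $(-1)^r F_r^2$. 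Hence $R_{2j+1} - R_{2j} = \frac{(-1)^r F_r}{F_{2rj} F_{r(2j+1)}}$, and dividing $A_k$ by $F_{rk}$ produces exactly formula \eqref{eq d}.

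The only genuinely clever step is the initial choice of triplet $(rk, r, rn)$ in \eqref{eq 1.8}: it is what makes $F_{rn}$ appear as a factor on the right and cancel the $F_{rn}$ in the denominator, so that the shifted sum reassembles into $S_{r,k-1}$ rather than some unrelated combination. The secondary hazard is the sign bookkeeping under the shift $m = n+1$; the factor $(-1)^r$ from the identity and the factor $(-1)^{r-1}$ from the sign rewriting combine (independently of the parity of $r$) to produce the symmetric $+$ sign in $A_k + A_{k-1} = R_k$, which is precisely what makes the odd-$k$ telescoping terminate cleanly at $A_1 = F_r S_{r,1}$ with no residual dependence on $S_{r, k-1}$.
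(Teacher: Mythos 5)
Your proof is correct, and it follows a genuinely different route from the paper's after the common opening move. Both arguments begin with the same consequence of the generalized subtraction formula \eqref{eq 1.8} (your triplet $(rk, r, rn)$ gives exactly the identity $F_{rk}F_{r(n+1)} - F_rF_{r(n+k)} = (-1)^r F_{rn}F_{r(k-1)}$ that the paper uses in the equivalent form with the triplet $(r, rk, rn)$), and both are left with the shifted series $\sum_{n\ge 1} (-1)^{(r-1)(n-1)}/\bigl(F_{r(n+1)}F_{r(n+k)}\bigr)$. At that point the paper simply recognizes this series as an instance of its Corollary \ref{coll 2.6} applied to the arithmetic sequence $u_n = r(n+1)$ with the even gap $k-1$, which hands over the finite sum in one step; you instead re-index it as $S_{r,k-1}$ minus its first term, obtaining the two-term recursion $F_{rk}S_{r,k} + F_{r(k-1)}S_{r,k-1} = F_rS_{r,1} + F_{r(k-1)}/(F_rF_{rk})$, and then telescope it for odd $k$ with the help of Catalan's identity $F_n^2 - F_{n-a}F_{n+a} = (-1)^{n-a}F_a^2$ (itself a case of \eqref{eq 1.8}). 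I checked the sign bookkeeping in the index shift, the alternating iteration $A_k = A_1 + \sum_{j}(R_{2j+1}-R_{2j})$, and the evaluation $R_{2j+1}-R_{2j} = (-1)^r F_r/\bigl(F_{2rj}F_{(2j+1)r}\bigr)$: all are right, and absolute convergence justifies the termwise summation and splitting. What your approach buys is self-containedness — it does not rely on Theorem \ref{thm 2.1}, Theorem \ref{thm 2.5} or Corollary \ref{coll 2.6}, only on \eqref{eq 1.8} — and it yields as a by-product a recursion valid for all $k \geq 2$ (even as well as odd); the cost is that for odd $k$ you are in effect re-deriving the even-gap closed form of Corollary \ref{coll 2.6} for the sequence $r(n+1)$, which makes the argument longer than the paper's two-line reduction.
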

\begin{proof}
Let $r$ and $k$ be positive integers and suppose that $k$ is odd. Because the formula of the theorem is trivial for $k = 1$, we can assume that $k \geq 3$. We have:
\begin{eqnarray*}
S_{r , 1} - \frac{F_{r k}}{F_r} S_{r , k} & = & \sum_{n = 1}^{+ \infty} \frac{(-1)^{(r - 1)(n - 1)}}{F_{r n} F_{r (n + 1)}} - \frac{F_{r k}}{F_r} \sum_{n = 1}^{+ \infty} \frac{(-1)^{(r - 1)(n - 1)}}{F_{r n} F_{r (n + k)}} \\
& = & \sum_{n = 1}^{+ \infty} (-1)^{(r - 1)(n - 1)} \frac{F_r F_{r (n + k)} - F_{r k} F_{r (n + 1)}}{F_r F_{r n} F_{r (n + 1)} F_{r (n + k)}} .
\end{eqnarray*}
But according to Formula \eqref{eq 1.8} (applied for the triplet $(r , r k , r n)$), we have:
\begin{eqnarray*}
F_r F_{r (n + k)} - F_{r k} F_{r (n + 1)} & = & (-1)^{r k} F_{r n} F_{r (1 - k)} \\
& = & (-1)^{r k} F_{r n} (-1)^{r (k - 1) + 1} F_{r (k - 1)} ~~~~~~~~ \text{(by \eqref{eq 1.2})} \\
& = & (-1)^{r + 1} F_{r n} F_{r (k - 1)} . 
\end{eqnarray*}
Using this, it follows that:
\begin{eqnarray*}
S_{r , 1} - \frac{F_{r k}}{F_r} S_{r , k} & = & (-1)^{r + 1} \frac{F_{r (k - 1)}}{F_r} \sum_{n = 1}^{+ \infty} \frac{(-1)^{(r - 1)(n - 1)}}{F_{r (n + 1)} F_{r (n + k)}} \\
& = & (-1)^{r + 1} \frac{F_{r (k - 1)}}{F_r} \sum_{n = 1}^{+ \infty} \frac{(-1)^{(r - 1)(n - 1)}}{F_{u_n} F_{u_{n + k - 1}}} , 
\end{eqnarray*}
where we have put $u_n := r (n + 1)$ ($\forall n \geq 1$). \\
Next, because $(k - 1)$ is even (since $k$ is supposed odd), it follows by Corollary \ref{coll 2.6} that:
\begin{eqnarray*}
S_{r , 1} - \frac{F_{r k}}{F_r} S_{r , k} & = & (-1)^{r + 1} \frac{F_{r (k - 1)}}{F_r} \times \frac{F_r}{F_{(k - 1) r}} \sum_{n = 1}^{(k - 1)/2} \frac{1}{F_{u_{2 n}} F_{u_{2 n - 1}}} \\
& = & (-1)^{r + 1} \sum_{n = 1}^{(k - 1)/2} \frac{1}{F_{2 n r} F_{(2 n + 1) r}} .
\end{eqnarray*}
The formula of the theorem follows.
\end{proof}

\pagebreak

\noindent\textbf{Remarks.}
\begin{enumerate}
\item The particular case of Formula \eqref{eq d} corresponding to $r = 1$ is already established by Rabinowitz \cite{rabi}.
\item If $r$ is an odd positive integer, Jeannin \cite{jean2} established an expression of $S_{r , 1}$ in terms of the values of the Lambert series. 
\end{enumerate}
\section{The second type of series}
In this section, we deal with series of the form $\sum_{n \geq 1} \frac{(-1)^{\varepsilon_n}}{F_{u_n} \Phi^{u_n}}$, where ${(u_n)}_n$ is an increasing sequence of positive integers and ${(\varepsilon_n)}_n$ is a sequence of integers depending on ${(u_n)}_n$. By grouping terms, we show that it is possible to transform some such series to series with rational terms. As we will precise later, some results of this section can be deduced from the results of the previous one by tending the parameter $k$ to infinity. We begin with the following:
\begin{thm}\label{thm 3.1}
Let ${(u_n)}_{n \geq 1}$ be an increasing sequence of positive integers. Then, we have:
\begin{equation}\label{eq 3.1}
\sum_{n = 1}^{+ \infty} \frac{(-1)^{u_n - n}}{F_{u_n} \Phi^{u_n}} = \sum_{n = 1}^{+ \infty} (-1)^{u_{2 n - 1} + 1} \frac{F_{u_{2 n} - u_{2 n - 1}}}{F_{u_{2 n}} F_{u_{2 n - 1}}}
\end{equation}
In particular, if $u_n$ have the same parity with $n$ (for any positive integer $n$), then we have:
\begin{equation}\label{eq 3.2}
\sum_{n = 1}^{+ \infty} \frac{1}{F_{u_n} \Phi^{u_n}} = \sum_{n = 1}^{+ \infty} \frac{F_{u_{2 n} - u_{2 n - 1}}}{F_{u_{2 n}} F_{u_{2 n - 1}}}
\end{equation}
\end{thm}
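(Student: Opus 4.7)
The plan is to prove Theorem \ref{thm 3.1} directly, by grouping consecutive pairs of terms in the left-hand side of \eqref{eq 3.1}. First I would observe that since ${(u_n)}_{n \geq 1}$ is a strictly increasing sequence of positive integers we have $u_n \geq n$, and since $F_m \Phi^m \geq \Phi^{2m-2}$ for all $m \geq 1$, the series on the left of \eqref{eq 3.1} converges absolutely at a geometric rate. This absolute convergence is what will permit me to regroup its terms into consecutive pairs $(2m-1, 2m)$ for $m = 1, 2, \dots$

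For the $m$-th pair, using that $(-1)^{u_{2m-1}-(2m-1)} = (-1)^{u_{2m-1}+1}$ and $(-1)^{u_{2m}-2m} = (-1)^{u_{2m}}$, I would factor out $(-1)^{u_{2m-1}+1}$ to rewrite it as
\[
(-1)^{u_{2m-1}+1}\left(\frac{1}{F_{u_{2m-1}}\Phi^{u_{2m-1}}} - \frac{(-1)^{u_{2m}-u_{2m-1}}}{F_{u_{2m}}\Phi^{u_{2m}}}\right).
\]
The crux of the argument is then to prove the identity
\[
\frac{1}{F_{b}\Phi^{b}} - \frac{(-1)^{a-b}}{F_{a}\Phi^{a}} = \frac{F_{a-b}}{F_{a}F_{b}}
\]
for all positive integers $a > b$ (applied with $a = u_{2m}$, $b = u_{2m-1}$). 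After clearing denominators this reduces to $F_a \Phi^{a-b} - (-1)^{a-b} F_b = F_{a-b}\, \Phi^a$, which is exactly a rearrangement of Formula \eqref{eq 1.9} with $(n,r) = (a, a-b)$ (noting $n - r = b$).

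Once this identity is in hand, summing the paired expressions over $m \geq 1$ yields precisely the right-hand side of \eqref{eq 3.1}. Formula \eqref{eq 3.2} is then the immediate special case where every $u_n$ has the same parity as $n$, so that each sign factor $(-1)^{u_n - n}$ and $(-1)^{u_{2m-1}+1}$ equals $+1$. The main obstacle is essentially careful bookkeeping of signs: verifying that $(-1)^{u_n-n}$ combines correctly after pairing, and matching Formula \eqref{eq 1.9} with the correct choice of parameters to produce the needed algebraic identity. No deeper analytic input is required, since Formula \eqref{eq 1.9} does all the real work.
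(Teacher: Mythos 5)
Your proposal is correct and follows essentially the same route as the paper: pair the terms with indices $2m-1$ and $2m$, combine each pair over a common denominator, and reduce the resulting numerator identity to Formula \eqref{eq 1.9} with $(n,r)=(u_{2m},\,u_{2m}-u_{2m-1})$, which is exactly what the paper does. The only (harmless) difference is that you justify the regrouping by absolute convergence, whereas the paper only invokes convergence of the series, which already suffices for grouping consecutive terms.
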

\begin{proof}
The increase of ${(u_n)}_n$ ensures the convergence of the two series in \eqref{eq 3.1}. By grouping terms, we have:
\begin{eqnarray*}
\sum_{n = 1}^{+ \infty} \frac{(-1)^{u_n - n}}{F_{u_n} \Phi^{u_n}} & = & \sum_{n = 1}^{+ \infty} \left(\frac{(-1)^{u_{2 n} - 2 n}}{F_{u_{2 n}} \Phi^{u_{2 n}}} + \frac{(-1)^{u_{2 n - 1} - (2 n - 1)}}{F_{u_{2 n - 1}} \Phi^{u_{2 n - 1}}}\right) \\
& = & \sum_{n = 1}^{+ \infty} (-1)^{u_{2 n - 1} + 1} \frac{\Phi^{u_{2 n} - u_{2 n - 1}} F_{u_{2 n}} + (-1)^{u_{2 n} - u_{2 n - 1} + 1} F_{u_{2 n - 1}}}{F_{u_{2 n}} F_{u_{2 n - 1}} \Phi^{u_{2 n}}} \\
& \!\!\!\!\!\!\!\!\!\!\!\!\!\!\!\!\!\!\!\!\!\!\!\!\!\!\!\!= & \!\!\!\!\!\!\!\!\!\!\!\!\!\! \sum_{n = 1}^{+ \infty} (-1)^{u_{2 n - 1} + 1} \frac{\Phi^{u_{2 n}} F_{u_{2 n} - u_{2 n - 1}}}{F_{u_{2 n}} F_{u_{2 n - 1}} \Phi^{u_{2 n}}} ~~~~\text{(according to Formula \eqref{eq 1.9})} \\
\\
& \!\!\!\!\!\!\!\!\!\!\!\!\!\!\!\!\!\!\!\!\!\!\!\!\!\!\!\!= & \!\!\!\!\!\!\!\!\!\!\!\!\!\! \sum_{n = 1}^{+ \infty} (-1)^{u_{2 n - 1} + 1} \frac{F_{u_{2 n} - u_{2 n - 1}}}{F_{u_{2 n}} F_{u_{2 n - 1}}} ,
\end{eqnarray*}
which confirms the first formula of the theorem.

The second formula of the theorem is an immediate consequence of the first one. The proof is complete.
\end{proof}

\noindent\textbf{Remark.} We can also prove Theorem \ref{thm 3.1} by tending $k$ to infinity in Formulas \eqref{eq 2.14} and \eqref{eq 2.15} of Theorem \ref{thm 2.5}. To do so, we must previously remark that for any positive integer $n$, we have:
$$
\lim_{k \rightarrow + \infty} \frac{F_{u_{n + 2 k} - u_n}}{F_{u_{n + 2 k}}} = \frac{1}{\Phi^{u_n}}
$$
(according to the Binet Formula \eqref{eq 1.3}) and that the series of functions $\sum_{n = 1}^{+ \infty} (-1)^{u_n - n} \frac{F_{u_{n + 2 k} - u_n}}{F_{u_n} F_{u_{n + 2 k}}}$ (from $\N^*$ to $\R$) converges uniformly on $\N^*$.

According to the closed formulas of $F_n$ and $L_n$ (see Formulas \eqref{eq 1.3} and \eqref{eq 1.6}), it is immediate that $\lim_{n \rightarrow + \infty} \frac{L_n}{F_n} = \sqrt{5}$. So the approximations $\sqrt{5} \simeq \frac{L_n}{F_n}$ $(n \geq 1)$ are increasingly better when $n$ increases. From Theorem \ref{thm 3.1}, we derive a curious formula in which the sum of the errors of the all approximations $\sqrt{5} \simeq \frac{L_n}{F_n}$ ($n \geq 1$) is transformed to a series with rational terms. We have the following:
\begin{coll}\label{coll 3.2}
We have:
\begin{equation}\label{eq 3.3}
\sum_{n = 1}^{+ \infty} \left|\sqrt{5} - \frac{L_n}{F_n}\right| = 2 \sum_{n = 1}^{+ \infty} \frac{1}{F_n \Phi^n} = 2 \sum_{n = 1}^{+ \infty} \frac{1}{F_{2 n} F_{2 n - 1}}
\end{equation}
\end{coll}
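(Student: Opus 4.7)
The proof naturally splits into the two equalities in \eqref{eq 3.3}, and both are short once the right tools are identified.

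For the first equality, the plan is to use the Binet formulas \eqref{eq 1.3} and \eqref{eq 1.6} to compute the error $\sqrt{5} - L_n/F_n$ exactly. Since $\sqrt{5} F_n = \Phi^n - \Phibar^n$ and $L_n = \Phi^n + \Phibar^n$, subtracting gives $\sqrt{5} F_n - L_n = -2 \Phibar^n$, hence
$$
\sqrt{5} - \frac{L_n}{F_n} = \frac{-2 \Phibar^n}{F_n} = \frac{2 (-1)^{n + 1}}{F_n \Phi^n} ,
$$
where the second equality uses $\Phibar = -1/\Phi$. Taking absolute values (and using $F_n > 0$ for $n \geq 1$) yields $\left|\sqrt{5} - L_n/F_n\right| = 2/(F_n \Phi^n)$, and summing over $n \geq 1$ produces the first equality of \eqref{eq 3.3}.

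For the second equality, I would simply apply Formula \eqref{eq 3.2} of Theorem \ref{thm 3.1} to the sequence $u_n = n$. Since $u_n$ trivially has the same parity as $n$, the hypothesis is satisfied, and $u_{2n} - u_{2n - 1} = 1$ gives $F_{u_{2n} - u_{2n-1}} = F_1 = 1$, so
$$
\sum_{n = 1}^{+ \infty} \frac{1}{F_n \Phi^n} = \sum_{n = 1}^{+ \infty} \frac{1}{F_{2 n} F_{2 n - 1}} .
$$
Multiplying by $2$ and chaining with the first equality completes the proof.

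There is essentially no hard step: the only computation that requires any care is the algebraic identity $\sqrt{5} F_n - L_n = -2 \Phibar^n$, and even that is a direct subtraction of the two Binet-type formulas. The result is really a dressed-up special case of Theorem \ref{thm 3.1}, and the absolute value on the left side of \eqref{eq 3.3} is exactly what removes the alternating sign $(-1)^{n+1}$ that would otherwise appear.
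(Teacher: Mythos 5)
Your proposal is correct and follows the paper's own argument exactly: the first equality comes from the Binet-type computation $\sqrt{5}F_n - L_n = -2\Phibar^n$, and the second from applying Formula \eqref{eq 3.2} of Theorem \ref{thm 3.1} with $u_n = n$. Nothing to add.
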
  
\begin{proof}
According to Formulas \eqref{eq 1.3} and \eqref{eq 1.6}, we have for any positive integer $n$:
\begin{multline*}
\left\vert\sqrt{5} - \frac{L_n}{F_n}\right\vert = \frac{\left\vert\sqrt{5} F_n - L_n\right\vert}{F_n} = \frac{\left\vert\left(\Phi^n - \Phibar^n\right) - \left(\Phi^n + \Phibar^n\right)\right\vert}{F_n} = \frac{\left\vert - 2 \Phibar^n\right\vert}{F_n} \\ = \frac{2}{F_n \Phi^n} ~~~~~~~~\text{(since $\Phibar = - \frac{1}{\Phi}$).}
\end{multline*}
So, it follows that:
$$
\sum_{n = 1}^{+ \infty} \left\vert \sqrt{5} - \frac{L_n}{F_n}\right\vert = 2 \sum_{n = 1}^{+ \infty} \frac{1}{F_n \Phi^n} ,
$$
which is the first equality of \eqref{eq 3.3}.

The second equality of \eqref{eq 3.3} follows from Theorem \ref{thm 3.1} by taking $u_n = n$ ($\forall n \geq 1$). The proof is complete.
\end{proof}
Further, it is known that for any positive integer $n$, the $n$\textsuperscript{th} convergent of the regular continued fraction expansion of the number $\sqrt{5}$ is equal to the irreducible rational fraction $r_n := \frac{L_{3 n}/2}{F_{3 n}/2}$. So, by repeating the proof of Corollary \ref{coll 3.2}, replacing the index $n$ by $3 n$, we immediately obtain the following:
\begin{coll}\label{coll a}
We have:
\begin{equation}\label{eq b}
\sum_{r \in \Lambda} \left|\sqrt{5} - r\right| = 2 \sum_{n = 1}^{+ \infty} \frac{1}{F_{3 n} \Phi^{3 n}} = 4 \sum_{n = 1}^{+ \infty} \frac{1}{F_{6 n} F_{6 n - 3}}
\end{equation}
where $\Lambda$ denotes the set of the regular continued fraction convergents of the number $\sqrt{5}$. \hfill $\square$
\end{coll}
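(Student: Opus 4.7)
The plan is to imitate the proof of Corollary \ref{coll 3.2} verbatim with the index $n$ replaced by $3n$, and then to invoke Theorem \ref{thm 3.1} with the specific choice $u_n = 3n$. Writing $\Lambda = \{r_n : n \ge 1\}$ with $r_n := L_{3n}/F_{3n}$ (which equals the reduced form $(L_{3n}/2)/(F_{3n}/2)$, since $L_{3n}$ and $F_{3n}$ are both even), the $r_n$ are pairwise distinct because successive convergents of a quadratic irrational alternate strictly around the limit; consequently, summing over the set $\Lambda$ coincides with summing over $n \ge 1$.

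For the first equality in \eqref{eq b}, I would simply transcribe the computation in Corollary \ref{coll 3.2}. Using the Binet formula \eqref{eq 1.3} together with \eqref{eq 1.6}, a direct calculation gives
$$\left|\sqrt{5} - r_n\right| = \frac{\left|\sqrt{5}\, F_{3n} - L_{3n}\right|}{F_{3n}} = \frac{\left|(\Phi^{3n} - \Phibar^{3n}) - (\Phi^{3n} + \Phibar^{3n})\right|}{F_{3n}} = \frac{2}{F_{3n}\, \Phi^{3n}},$$
where the last equality uses $\Phibar = -1/\Phi$. Summing over $n \ge 1$ yields the first equality of \eqref{eq b}.

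For the second equality, I would apply Theorem \ref{thm 3.1} with $u_n = 3n$. Since $3n$ has the same parity as $n$ for every positive integer $n$, Formula \eqref{eq 3.2} is available and delivers
$$\sum_{n = 1}^{+\infty} \frac{1}{F_{3n}\, \Phi^{3n}} = \sum_{n = 1}^{+\infty} \frac{F_{u_{2n} - u_{2n-1}}}{F_{u_{2n}}\, F_{u_{2n-1}}} = \sum_{n = 1}^{+\infty} \frac{F_3}{F_{6n}\, F_{6n-3}} = 2 \sum_{n = 1}^{+\infty} \frac{1}{F_{6n}\, F_{6n-3}},$$
because $F_3 = 2$. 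Multiplying by $2$ then gives the second equality. No genuine obstacle arises here: the only bookkeeping detail is tracking the factor $F_3 = 2$, and the whole argument is a straightforward reindexing of the Corollary \ref{coll 3.2}/Theorem \ref{thm 3.1} machinery.
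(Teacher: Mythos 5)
Your proposal is correct and follows exactly the paper's route: the paper likewise takes as known that the convergents of $\sqrt{5}$ are $r_n = \frac{L_{3n}/2}{F_{3n}/2}$, repeats the computation of Corollary \ref{coll 3.2} with $n$ replaced by $3n$ to get $\left|\sqrt{5} - r_n\right| = \frac{2}{F_{3n}\Phi^{3n}}$, and then applies Theorem \ref{thm 3.1} (Formula \eqref{eq 3.2}) with $u_n = 3n$, the factor $F_3 = 2$ producing the constant $4$. No gaps; your extra remarks on distinctness of the convergents and the evenness of $L_{3n}, F_{3n}$ are harmless bookkeeping.
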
 

\subsection*{Some other numerical applications}
By taking successively in Theorem \ref{thm 3.1}: $u_n = 2 n$, $u_n = 2 n - 1$ and then $u_n = 2 n + 1$, we respectively obtain the three following formulas:
\begin{equation}\label{eq 3.5}
\begin{split}
\sum_{n = 1}^{+ \infty} \frac{(-1)^{n + 1}}{F_{2 n} \Phi^{2 n}} = \sum_{n = 1}^{+ \infty} \frac{1}{F_{4 n} F_{4 n - 2}} ~,~ 
\sum_{n = 1}^{+ \infty} \frac{(-1)^{n + 1}}{F_{2 n - 1} \Phi^{2 n - 1}} = \sum_{n = 1}^{+ \infty} \frac{1}{F_{4 n - 1} F_{4 n - 3}} ~, \\
\sum_{n = 1}^{+ \infty} \frac{(-1)^{n + 1}}{F_{2 n + 1} \Phi^{2 n + 1}} = \sum_{n = 1}^{+ \infty} \frac{1}{F_{4 n + 1} F_{4 n - 1}}
\end{split}
\end{equation}
Remark that the addition (side to side) of the two last formulas of \eqref{eq 3.5} gives the second formula of \eqref{eq 1.12}.

Now, by applying another technique of grouping terms, we are going to show that some other series of the form $\sum_{n \geq 1} \frac{1}{F_{u_n} \Phi^{u_n}}$ (in which $u_n$ have not always the same parity with $n$) can be also transformed to series with rational terms; precisely to series of reciprocals of Fibonacci numbers. We have the following:
\begin{thm}\label{thm 3.3}
For any integer $r \geq 2$, we have:
\begin{equation}\label{eq a}
\sum_{n = 0}^{+ \infty} \frac{1}{F_{2^r n + 2^{r - 1}} \Phi^{2^r n + 2^{r - 1}}} = \sum_{n = 1}^{+ \infty} \frac{1}{F_{2^r n}}
\end{equation}
\end{thm}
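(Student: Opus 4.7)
The plan is to use the Hoggatt--Bicknell formula \eqref{eq 2.10} to rewrite each summand on the left of \eqref{eq a} as an infinite tail series of reciprocal Fibonacci numbers, and then to identify the resulting iterated sum as $\sum_{n \geq 1} 1/F_{2^r n}$ by means of the unique $2$-adic factorization of positive integers.

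First I would establish the following auxiliary identity: for every \emph{even} positive integer $K$,
$$\frac{1}{F_K\,\Phi^K} \;=\; \sum_{m=1}^{+\infty} \frac{1}{F_{K\cdot 2^m}}.$$
This follows immediately from applying \eqref{eq 2.10} with $k = K/2 \in \N^*$: that formula reads $\sum_{n \geq 0} 1/F_{(K/2)\,2^n} = 1/F_{K/2} + 1/F_K + 1/(F_K \Phi^K)$, and cancelling the first two terms on the right against the $n=0,1$ terms on the left leaves exactly $1/(F_K\Phi^K) = \sum_{n\geq 2} 1/F_{(K/2)\,2^n} = \sum_{m\geq 1} 1/F_{K\cdot 2^m}$. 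The evenness hypothesis is what allows $k = K/2$ to be a positive integer, and this is precisely the role of the assumption $r \geq 2$: indeed $K_n := 2^{r-1}(2n+1)$ is even for every $n \geq 0$ exactly when $r \geq 2$.

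Applying the auxiliary identity to $K = K_n$ for each $n \geq 0$ and summing, I get
$$\sum_{n=0}^{+\infty} \frac{1}{F_{K_n}\,\Phi^{K_n}} \;=\; \sum_{n=0}^{+\infty}\sum_{m=1}^{+\infty} \frac{1}{F_{2^{r-1+m}(2n+1)}}.$$
Since all terms are positive, Tonelli's theorem legitimates interchanging the order of summation. Setting $\ell := r-1+m$, the double sum becomes $\sum_{\ell \geq r}\sum_{j\text{ odd},\,j\geq 1} 1/F_{2^\ell j}$.

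The final step is the bookkeeping observation that every positive integer $N$ with $2^r \mid N$ admits a unique factorization $N = 2^\ell j$ with $\ell \geq r$ and $j$ odd, where $\ell$ is the $2$-adic valuation of $N$. Each such $N$ therefore contributes exactly once to the rearranged sum, yielding
$$\sum_{n=0}^{+\infty} \frac{1}{F_{K_n}\,\Phi^{K_n}} \;=\; \sum_{\substack{N \geq 1 \\ 2^r\mid N}} \frac{1}{F_N} \;=\; \sum_{n=1}^{+\infty} \frac{1}{F_{2^r n}},$$
as required. The main conceptual input is the auxiliary identity extracted from \eqref{eq 2.10}; the rest is an elementary rearrangement, so I do not expect a significant obstacle.
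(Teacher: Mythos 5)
Your proof is correct, but it takes a genuinely different route from the paper's. The paper argues directly: it observes that $\left\{2^r n + 2^{r-1} : n \in \N\right\} = \left\{2^{r-1} n : n \in \N^*\right\} \setminus \left\{2^r n : n \in \N^*\right\}$, writes the left-hand side of \eqref{eq a} as $\sum_{n \geq 1}\bigl(\tfrac{1}{F_{2^{r-1} n}\Phi^{2^{r-1} n}} - \tfrac{1}{F_{2^r n}\Phi^{2^r n}}\bigr)$, and then shows each bracket equals $\tfrac{1}{F_{2^r n}}$ using $F_{2^r n} = F_{2^{r-1} n} L_{2^{r-1} n}$ together with $L_m \Phi^m - 1 = \Phi^{2m}$. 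You instead expand each summand $\tfrac{1}{F_K \Phi^K}$, $K = 2^{r-1}(2n+1)$, as the tail $\sum_{m \geq 1} \tfrac{1}{F_{K 2^m}}$ of the Hoggatt--Bicknell series and reassemble the double sum by $2$-adic valuation; all steps check out, including the correct identification of $r \geq 2$ as exactly what makes $K$ even, and the Tonelli justification of the rearrangement. Your auxiliary identity is in fact precisely Formula \eqref{eq 2.9} with $a = 2$, so it is available from the paper's own Corollary \ref{coll 2.3} rather than only from the cited result \eqref{eq 2.10}. Note also that your argument is essentially the method the paper attributes to Melham and Shannon in the Remark following the theorem (they obtain the case $r = 2$ by summing \eqref{eq 2.10} over odd $k$); you have generalized that approach to all $r \geq 2$, whereas the author deliberately gives a different, self-contained proof. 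What your route buys is a transparent combinatorial explanation of the right-hand side: each multiple of $2^r$ is hit exactly once via the unique factorization $N = 2^\ell j$ with $j$ odd and $\ell \geq r$. What the paper's route buys is brevity and independence from the reciprocal-series machinery of Section \ref{sec 2}.
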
 
\begin{proof}
Let $r \geq 2$ be an integer. From the trivial equality of sets:
$$
\left\{2^r n + 2^{r - 1} ~,~ n \in \N\right\} = \left\{2^{r - 1} n ~,~ n \in \N^*\right\} \setminus \left\{2^r n ~,~ n \in \N^*\right\} ,
$$
we have:
\begin{eqnarray*}
\sum_{n = 0}^{+ \infty} \frac{1}{F_{2^r n + 2^{r - 1}} \Phi^{2^r n + 2^{r - 1}}} & = & \sum_{n = 1}^{+ \infty} \frac{1}{F_{2^{r - 1} n} \Phi^{2^{r - 1} n}} - \sum_{n = 1}^{+ \infty} \frac{1}{F_{2^r n} \Phi^{2^r n}} \\
& = & \sum_{n = 1}^{+ \infty} \left(\frac{1}{F_{2^{r - 1} n} \Phi^{2^{r - 1} n}} - \frac{1}{F_{2^r n} \Phi^{2^r n}}\right) \\
& = & \sum_{n = 1}^{+ \infty} \frac{L_{2^{r - 1} n} \Phi^{2^{r - 1} n} - 1}{F_{2^r n} \Phi^{2^r n}}
\end{eqnarray*}
(since $F_{2^r n} = F_{2^{r - 1} n} L_{2^{r - 1} n}$, according to \eqref{eq 1.7}). But since $L_{2^{r - 1} n} \Phi^{2^{r - 1} n} - 1 = (\Phi^{2^{r - 1} n} + \Phibar^{2^{r - 1} n}) \Phi^{2^{r - 1} n} - 1 = \Phi^{2^r n} + (\Phi \Phibar)^{2^{r - 1} n} - 1 = \Phi^{2^r n}$ (according to \eqref{eq 1.6} and to that $\Phibar = - \frac{1}{\Phi}$), we conclude that:
$$
\sum_{n = 0}^{+ \infty} \frac{1}{F_{2^r n + 2^{r - 1}} \Phi^{2^r n + 2^{r - 1}}} = \sum_{n = 1}^{+ \infty} \frac{1}{F_{2^r n}} ,
$$
as required. The theorem is proved.
\end{proof}

\noindent\textbf{Remark.} Taking $r = 2$ in Formula \eqref{eq a} of Theorem \ref{thm 3.3} gives the following:
\begin{equation}\label{eq f}
\sum_{n = 0}^{+ \infty} \frac{1}{F_{4 n + 2} \Phi^{4 n + 2}} = \sum_{n = 1}^{+ \infty} \frac{1}{F_{4 n}} 
\end{equation}
Note that this last formula was already pointed out by Melham and Shannon \cite{melh} who proved it by summing both sides of Formula \eqref{eq 2.10} over $k$, lying in the set of the odd positive integers.

\end{document}